\numberwithin{equation}{section}
\newtheorem{theorem}{Theorem}
\newtheorem{lemma}[theorem]{Lemma}
\newtheorem{corollary}[theorem]{Corollary}
\newtheorem{claim}{Claim}
\author{Julien Baste\affiliationmark{1}
  \and Stefan Ehard\affiliationmark{2}
  \and Dieter Rautenbach\affiliationmark{2}}
\title[Non-monotone target sets for threshold values restricted to $0$, $1$, and the vertex degree]{Non-monotone target sets for threshold values restricted to $0$, $1$, and the vertex degree\thanks{Funded by the Deutsche Forschungsgemeinschaft (DFG, German Research Foundation) - 388217545.}}
\affiliation{
Univ. Lille, CNRS, Centrale Lille, UMR 9189 CRIStAL, Lille, France\\
Institute of Optimization and Operations Research, Ulm University, Germany}
\keywords{non-monotone activation process, target set}
\begin{document}
\publicationdetails{24}{2022}{1}{21}{6844}
\maketitle
\begin{abstract}
We consider a non-monotone activation process $(X_t)_{t\in\{ 0,1,2,\ldots\}}$
on a graph $G$, where 
$X_0\subseteq V(G)$, 
$X_t=\{ u\in V(G):|N_G(u)\cap X_{t-1}|\geq \tau(u)\}$ 
for every positive integer $t$, and 
$\tau:V(G)\to \mathbb{Z}$ is a threshold function.
The set $X_0$ is a so-called non-monotone target set for $(G,\tau)$
if there is some $t_0$ such that $X_t=V(G)$ for every $t\geq t_0$.
Ben-Zwi, Hermelin, Lokshtanov, and Newman [Discrete Optimization 8 (2011) 87-96] 
asked whether a target set of minimum order 
can be determined efficiently if $G$ is a tree.
We answer their question in the affirmative
for threshold functions $\tau$ satisfying 
$\tau(u)\in \{ 0,1,d_G(u)\}$ for every vertex~$u$.
For such restricted threshold functions,
we give a characterization of target sets that allows to show
that the minimum target set problem 
remains NP-hard for planar graphs of maximum degree $3$
but is efficiently solvable for graphs of bounded treewidth.
\end{abstract}

\section{Introduction}

Target sets are a widely studied model for spreading processes in networks, such as influence diffusion and spread of opinions in social networks or the spread of an infectious disease.
For a graph $G$ and an integer-valued threshold function $\tau$ on its vertices, a \emph{target set} is a set of vertices of $G$ that we consider \emph{active}, and by iteratively activating vertices $v$ of $G$ that have at least $\tau(v)$ active neighbours, eventually the entire vertex set of $G$ becomes active.
This monotone version --- as activated vertices remain active for the entire process --- has received most attention \cite{acbewo, beehpera, cedoperasz,ch,ehra,gera,za} and has been studied in various variations \cite{keklta,drro}.

In this paper we study the natural non-monotone target set selection problem as described by Ben-Zwi, Hermelin, Lokshtanov, and Newman~\cite{behelonw}, where a vertex $v$ of $G$ becomes non-active at any iteration of the spreading process whenever the number of its active neighbours is less than $\tau(v)$. 
Vertices may activate and deactivate several times,
and thus, the underlying process is non-monotone.
Unsurprisingly, the optimization problem of finding a minimum non-monotone target set is notably hard; Ben-Zwi et al.~\cite{behelonw} show $\#$P-hardness of a weighted directed version.
Surprisingly, it even remains open whether the (unweighted and undirected)
non-monotone target set selection problem
can be solved efficiently  on trees --- a question that was raised in 2011 by Ben-Zwi et al.~\cite{behelonw}.
In this paper we make the first moderate progress on this question.
Our results grew out of an efficient solution for paths
and apply to a natural class of restricted instances.
Before we collect some terminology and notation in order to state our results,
we would like to point out that non-monotone processes
were also studied in \cite{dodrrasz,pe}.

We consider finite, simple, and undirected graphs.
The sets of positive integers and of non-negative integers 
are denoted by 
$\mathbb{N}=\{ 1,2,3,\ldots\}$
and
$\mathbb{N}_0=\{ 0,1,2,3,\ldots\}$, respectively.
For an integer $n$, let $[n]$ be the set of positive integers at most $n$.
Let $G$ be a graph.
For a set $X$ of vertices of $G$, let 
$N_G(X)=
\left(\bigcup_{u\in X}N_G(u)\right)\setminus X$, 
and let
$N_G[X]=X\cup N_G(X)$.
A {\it threshold function for} $G$ is a function $\tau:V(G)\to\mathbb{N}_0$.
If $X$ is a set of vertices of $G$, then the {\it non-monotone activation process on $(G,\tau)$ starting with $X$}
is the sequence $(X_t)_{t\in\mathbb{N}_0}$, 
$X=X_0$, and
$$X_t=\big\{ u\in V(G):|N_G(u)\cap X_{t-1}|\geq \tau(u)\big\}\mbox{ for every $t$ in $\mathbb{N}$.}$$
If there is some $t_0\in\mathbb{N}_0$ 
such that $X_t=V(G)$ for every $t\geq t_0$,
then $X$ is a {\it non-monotone target set for $(G,\tau)$};
note that $t_0\leq 2^n$ if $t_0$ exists and $G$ has order $n$.
If $\tau(u)>d_G(u)$ for some vertex $u$ of $G$,
where $d_G(u)$ is the degree of $u$ in $G$,
then $u\not\in X_t$ for every $t$ in $\mathbb{N}$.
Therefore, we may assume $\tau\leq d_G$ in what follows.
Note, furthermore, that vertices $u$ with $\tau(u)<0$
behave similarly as vertices with $v$ with $\tau(v)=0$.
Hence, we may additionally assume $\tau\geq 0$ in what follows.

Our results concern the non-monotone target set problem for instances $(G,\tau)$,
where 
\begin{eqnarray}\label{erest}
\tau(u)\in \big\{ 0,1,d_G(u)\big\}\mbox{ for every vertex $u$ of $G$.}
\end{eqnarray}
First, we describe a simple reduction for such instances in Lemma~\ref{lemma1},
which isolates the vertices~$u$ with $\tau(u)=d_G(u)$.
Our central result is Theorem~\ref{theorem1}, 
which characterizes non-monotone target sets for such instances
in terms of intersection conditions. 
With Theorem~\ref{theorem2} 
we show that the considered restricted instances are still hard. 
Finally,
with Theorem~\ref{theorem3}
and Corollary~\ref{corollary1},
we show that the considered restricted instances 
are tractable for graphs of bounded treewidth;
providing a positive answer to the stated question from \cite{behelonw}
at least for the considered restricted instances.

\section{Results}

Throughout the paper, component always means connected component.
Our first lemma yields a simple reduction rule.

\begin{lemma}\label{lemma1}
Let $G$ be a graph.
Let $\tau$ be a threshold function for $G$ 
satisfying (\ref{erest}).
Let $X$ be a set of vertices of $G$.
Let $U$ be the vertex set of a component 
of order at least $2$ of the graph 
$$G\big[\big\{ u\in V(G):\tau(u)=d_G(u)\big\}\big].$$
$X$ is a non-monotone target set for $(G,\tau)$
if and only if 
$N_G[U]\subseteq X$
and
$X\setminus N_G[U]$ is a non-monotone target set for $(G',\tau')$,
where 
\begin{eqnarray*}
G'&=&G-N_G[U]\mbox{ and }\\
\tau'(u)&=&\max\big\{ 0,\tau(u)-|N_G(u)\cap N_G[U]|\big\}\mbox{ for every vertex $u$ of $G'$.}
\end{eqnarray*}
Furthermore, 
$(G',\tau')$ satisfies (\ref{erest}),
that is, $\tau'(u)\in \big\{ 0,1,d_{G'}(u)\big\}$ for every $u\in V(G')$.
\end{lemma}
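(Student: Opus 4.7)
The approach is to exploit that each $u\in U$ has $\tau(u)=d_G(u)$ and, since $U$ is a connected component of order at least $2$ in the induced high-threshold subgraph, at least one neighbor in $U$. This makes $N_G[U]$ behave like a rigid block that must lie in $X$ from the start and remain invariant throughout the process. I would proceed in four short steps: necessity of $N_G[U]\subseteq X$, a structural observation about $N_G(U)\setminus U$, a coupling of the two processes yielding the equivalence, and the preservation of (\ref{erest}).

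For necessity, suppose $X$ is a target set with $X_{t_0}=V(G)$. I would prove by backward induction on $t\in\{0,\ldots,t_0\}$ that $N_G[U]\subseteq X_t$. In the inductive step, each $u\in U$ satisfies $\tau(u)=d_G(u)$, so $u\in X_{t+1}$ forces $N_G(u)\subseteq X_t$; taking the union over $u\in U$ yields $N_G(U)\subseteq X_t$ and, because connectedness with $|U|\geq 2$ implies every vertex of $U$ is a neighbor of some other vertex of $U$, also $U\subseteq X_t$. Setting $t=0$ gives $N_G[U]\subseteq X$. The pivotal structural observation used below is that $\tau(v)\in\{0,1\}$ for every $v\in N_G(U)\setminus U$: otherwise $\tau(v)=d_G(v)$, and a neighbor of $v$ in $U$ would place $v$ in the component of $U$ in $G[\{u:\tau(u)=d_G(u)\}]$, contradicting $v\notin U$.

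To establish the full equivalence, I would couple the two processes. Let $(Y_t)$ denote the process on $(G',\tau')$ starting with $Y_0=X\setminus N_G[U]$; I would prove by induction on $t$ that $X_t=Y_t\cup N_G[U]$. Vertices in $U$ retain activation because $N_G(u)\subseteq N_G[U]\subseteq X_t$; a vertex $v\in N_G(U)\setminus U$ retains activation because $\tau(v)\leq 1$ (by the observation above) and $v$ has a neighbor in $U\subseteq X_t$. For $v\in V(G')$, the decomposition $N_G(v)=(N_G(v)\cap N_G[U])\sqcup N_{G'}(v)$ together with $Y_t\subseteq V(G')$ turns the activation condition $|N_G(v)\cap X_t|\geq\tau(v)$ into $|N_{G'}(v)\cap Y_t|\geq\tau'(v)$, exactly the $(G',\tau')$-activation condition. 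Since $V(G)=V(G')\sqcup N_G[U]$, $X_t=V(G)$ iff $Y_t=V(G')$, giving the equivalence.

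Finally, I would verify (\ref{erest}) for $(G',\tau')$. If $\tau(u)\in\{0,1\}$ then $\tau'(u)\in\{0,1\}$ by the $\max$ in the definition. If $\tau(u)=d_G(u)\geq 2$, then $u\in V(G')$ means $u\notin N_G[U]$, and the contrapositive of the structural observation forces $N_G(u)\cap U=\emptyset$; hence $N_G(u)\cap N_G[U]=N_G(u)\setminus V(G')$, and $\tau'(u)=d_G(u)-|N_G(u)\setminus V(G')|=d_{G'}(u)$. The step I expect to be the main obstacle is the backward induction in the forward direction: it requires recognizing that the hypothesis ``$|U|\geq 2$ and $U$ connected'' is precisely the lever that propagates $U$ itself (not only $N_G(U)$) backwards in time, without which the entire reduction would fail.
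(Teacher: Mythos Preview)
Your proposal is correct and follows essentially the same route as the paper: show $N_G[U]\subseteq X_t$ for all $t$, couple the two processes via $X_t=Y_t\cup N_G[U]$, and verify (\ref{erest}) for $\tau'$. The only notable difference is cosmetic: for necessity the paper argues forward (if some $u\in U$ satisfies $u\notin X_t$, then using an edge of $G[U]$ one gets $u\notin X_{t+2k}$ for all $k$, a contradiction), whereas you run a clean backward induction from $t_0$; your version has the minor advantage of yielding $N_G[U]\subseteq X_t$ for all $t\le t_0$ in one stroke. Your explicit ``structural observation'' that $\tau(v)\in\{0,1\}$ for $v\in N_G(U)\setminus U$ is exactly what the paper compresses into the phrase ``in view of the $\tau$ values''. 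One small remark: in your verification of (\ref{erest}), the identity $N_G(u)\cap N_G[U]=N_G(u)\setminus V(G')$ holds for every $u\in V(G')$ simply because $N_G[U]=V(G)\setminus V(G')$, so the detour through $N_G(u)\cap U=\emptyset$ is not needed there.
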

\begin{proof}
We first prove the necessity part of the stated equivalence.
Therefore, let $X$ be a non-monotone target set for $(G,\tau)$.
Let $(X_t)_{t\in\mathbb{N}_0}$ be the non-monotone activation process on $(G,\tau)$ starting with $X$.
If $u\not\in X_t$ for some $u\in U$ and some $t\in\mathbb{N}_0$, and $v$ is a neighbor of $u$ in~$U$,
then $v\not\in X_{t+1}$ and $u\not\in X_{t+2}$, 
which implies the contradiction that $u\not\in X_{t+2k}$ for every $k\in\mathbb{N}_0$.
It follows that $U\subseteq X_t$ for every $t\in\mathbb{N}_0$,
and, in particular, $U\subseteq X$.
If $u\not\in X$ for some $u\in N_G(U)$, and $v$ is a neighbor of $u$ in $U$,
then $v\not\in X_1$, which is a contradiction.
Hence, we obtain $N_G(U)\subseteq X$.
Altogether, it follows that $N_G[U]\subseteq X$, which, in view of the $\tau$ values,
implies that $N_G[U]\subseteq X_t$ for every $t\in\mathbb{N}_0$.
Furthermore, if $(X'_t)_{t\in\mathbb{N}_0}$ is the non-monotone activation process on $(G',\tau')$ starting with $X\setminus N_G[U]$,
then the definitions of $G'$ and $\tau'$ imply that $X_t=X_t'\cup N_G[U]$ for every $t\in \mathbb{N}_0$,
which implies that $X\setminus N_G[U]$ is a non-monotone target set for $(G',\tau')$.

Now, we prove the sufficiency part of the stated equivalence.
Therefore, let $N_G[U]\subseteq X$
and let $X\setminus N_G[U]$ be a non-monotone target set for $(G',\tau')$.
Let $(X_t)_{t\in\mathbb{N}_0}$ and $(X'_t)_{t\in\mathbb{N}_0}$ be as above.
In view of the $\tau$ values,
it follows that $N_G[U]\subseteq X_t$ for every $t\in\mathbb{N}_0$.
By the definitions of $G'$ and $\tau'$,
this implies $X_t=X_t'\cup N_G[U]$ for every $t\in \mathbb{N}_0$,
which implies that $X$ is a non-monotone target set for $(G,\tau)$.

Let $u$ be a vertex of $G'$.
If $\tau(u)\in \{ 0,1\}$, then $\tau'(u)\in \{ 0,1\}$ follows immediately.
If $\tau(u)=d_G(u)$ and $\tau'(u)\not=0$, then 
$\tau'(u)
=\tau(u)-|N_G(u)\cap N_G[U]|
=d_G(u)-|N_G(u)\cap N_G[U]|
=d_{G'}(u)$.
Altogether, the function $\tau'$ satisfies (\ref{erest}).
\end{proof}

For our next result, we may assume that the polynomial time reduction 
described in Lemma~\ref{lemma1}
has already been applied.

\begin{theorem}\label{theorem1}
Let $G$ be a graph.
Let $\tau$ be a threshold function for $G$ satisfying (\ref{erest})
such that $G$ has no edge $uv$ with $\tau(u)=d_G(u)$ and $\tau(v)=d_G(v)$.
A set $X$ of vertices of $G$ is a non-monotone target set for $(G,\tau)$
if and only if the following conditions hold:
\begin{enumerate}[(1)]
\item Let $U$ be the vertex set of a component of $G\big[\big\{ u\in V(G):\tau(u)\in \{ 0,1\}\big\}\big]$ 
with $|U|\geq 2$ and $\tau(u)=1$ for every $u\in U$.
\begin{enumerate}[(a)]
\item If $G[U]$ is not bipartite, then $X\cap N_G[U]\not=\emptyset$.
\item If $G[U]$ is bipartite with partite sets $A$ and $B$,
$A'=N_G(B)\setminus N_G[A]$,
$B'=N_G(A)\setminus N_G[B]$, and
$C=N_G(A)\cap N_G(B)$, then
$$X\cap (A\cup A'\cup C)\not=\emptyset\,\,\,\,\,\,\,\mbox{ and }\,\,\,\,\,\,\,X\cap (B\cup B'\cup C)\not=\emptyset.$$
\end{enumerate}
\item If $u$ is a vertex with $\tau(u)=1$ such that $\tau(v)=d_G(v)$ for every $v\in N_G(u)$,
then there are two (not necessarily distinct) neighbors $v_1$ and $v_2$ of $u$ such that 
$v_1\in X$ and $N_G(v_2)\subseteq X$.
\end{enumerate}
\end{theorem}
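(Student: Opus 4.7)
My plan is to prove both directions of the equivalence by tracking the trajectory $(X_t)$ on suitable subsets. A useful preliminary, which follows from the connectedness of $G[U]$ and $|U|\geq 2$, is the identity $A\cup A'\cup C=N_G(B)$ together with the symmetric $B\cup B'\cup C=N_G(A)$, so condition (1)(b) is equivalent to $X\cap N_G(A)\neq\emptyset$ and $X\cap N_G(B)\neq\emptyset$.

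For necessity, I would handle the conditions separately. If $G[U]$ is not bipartite and $X\cap N_G[U]=\emptyset$, induction on $t$ gives $X_t\cap N_G[U]=\emptyset$ for every $t$: a vertex $u\in U$ has $\tau(u)=1$ with $N_G(u)\subseteq N_G[U]$, and a vertex in $N_G(U)\setminus U$ has threshold $d_G$ and an inactive neighbor in $U$. If $G[U]$ is bipartite with, say, $X\cap N_G(B)=\emptyset$, the same type of induction produces the parity invariant $X_t\cap N_G(B)=\emptyset$ for even $t$ and $X_t\cap N_G(A)=\emptyset$ for odd $t$, which prevents $B\subseteq X_t$ for any $t\geq 1$. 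For (2), set $N=N_G(u)$ and $M_v=N_G(v)\setminus\{u\}$; since every neighbor of $u$ has threshold $d_G$, one checks that for $t\geq 2$, $u\in X_t$ if and only if $u\in X_{t-2}$ and some $v\in N$ satisfies $M_v\subseteq X_{t-2}$. Tracing this recursion backward from a time at which $X_t=V(G)$ yields, along the even chain, some $v_2\in N$ with $M_{v_2}\subseteq X$ and $u\in X$ (hence $N_G(v_2)\subseteq X$), and along the odd chain, some $v_1\in N\cap X$.

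For sufficiency, assume (1) and (2). For each isolated $\tau=1$ vertex $u$ (with $v_1,v_2$ as in (2)), I would establish by simultaneous induction on $k\geq 0$ that $u\in X_{2k}$ and $u\in X_{2k+1}$, $v_1\in X_{2k}$, $v_2\in X_{2k+1}$, $M_{v_1}\subseteq X_{2k+1}$, and $M_{v_2}\subseteq X_{2k}$. The base case uses $v_1\in X$, $N_G(v_2)\subseteq X$, and the hypothesis that $G$ has no edge between two vertices with $\tau=d_G$, which forces every $w\in M_{v_i}$ to have threshold in $\{0,1\}$; the inductive step alternates the roles of $v_1$ and $v_2$. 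For each component $U$ of $G[\{w:\tau(w)\in\{0,1\}\}]$ with $|U|\geq 2$, either $U$ contains a vertex $w_0$ with $\tau(w_0)=0$ (which lies in $X_t$ for every $t\geq 1$ and drives BFS through the connected graph $G[U]$), or all vertices of $U$ have $\tau=1$ and condition (1) gives $X_1\cap U\neq\emptyset$ in the non-bipartite case and $X_1\cap A$ and $X_1\cap B$ both non-empty in the bipartite case. Combining the lower bound $X_{t+1}\cap U\supseteq\{u\in U:N_{G[U]}(u)\cap(X_t\cap U)\neq\emptyset\}$ with standard walk-reachability in $G[U]$ then yields $U\subseteq X_t$ for all sufficiently large $t$, and this persists. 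Finally, the no-$\tau=d_G$-edge hypothesis guarantees that every vertex with $\tau=d_G$ has all neighbors in the $\tau\in\{0,1\}$ parts, each eventually stably active, so it too is eventually stably active; hence $X_t=V(G)$ for all sufficiently large $t$.

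The main obstacle is the joint induction for isolated $\tau=1$ vertices, where a naive forward argument stalls at time $2$: one gets $v_2\in X_1$ from $N_G(v_2)\subseteq X$, but $M_{v_2}\subseteq X_1$ is not automatic because $v_2$ itself need not lie in $X$. The staggered induction, with $v_1$ active at even times and $v_2$ at odd times, is exactly what makes condition (2) (phrased only in terms of the initial set $X$) sufficient.
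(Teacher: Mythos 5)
Your proof is correct and follows essentially the same route as the paper's: the same case decomposition of $G\big[\{u:\tau(u)\in\{0,1\}\}\big]$, the same alternation/parity invariants for necessity, and the same staggered induction for the sufficiency of condition (2) (the paper phrases it as $\{v_1\}\cup N_G(v_2)\subseteq X_t\Rightarrow\{v_2\}\cup N_G(v_1)\subseteq X_{t+1}$, which is your even/odd scheme). The only cosmetic differences are your identity $A\cup A'\cup C=N_G(B)$ and your appeal to generic walk-reachability where the paper runs explicit odd-cycle and path arguments.
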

\begin{proof}
We first prove the necessity.
Let $X$ be a non-monotone target set for $(G,\tau)$.
Let $(X_t)_{t\in\mathbb{N}_0}$ be the non-monotone activation process on $(G,\tau)$ starting with $X$.
Let $U$ be as in (1).
Note that $\tau(u)=d_G(u)$ for every vertex $u\in N_G(U)$.
Therefore, if $X\cap N_G[U]=\emptyset$, then $X_t\cap N_G[U]=\emptyset$ for every $t\in \mathbb{N}_0$,
which is a contradiction.
This already implies condition (1)(a).
Now, let $U$, $A$, $B$, $A'$, $B'$, and $C$ be as in (1)(b). 
If $X_t\cap (A\cup A'\cup C)=\emptyset$ for some $t\in\mathbb{N}_0$, then, since 
\begin{itemize}
\item $N_G(B)\subseteq A\cup A'\cup C$, and
\item every vertex in $B'\cup C$ has a neighbor in $A$,
\end{itemize}
we have 
$X_{t+1}\cap (B\cup B'\cup C)=\emptyset$, and, by symmetry,
$X_{t+2}\cap (A\cup A'\cup C)=\emptyset$,
which implies the contradiction that $X_{t+2k}\cap (A\cup A'\cup C)=\emptyset$ for every $k\in\mathbb{N}_0$.
By symmetry, it follows that condition (1)(b) holds.
Now, let $u$ be as in (2).
If $u\not\in X_t$ for some $t\in\mathbb{N}_0$, then $X_{t+1}\cap N_G(u)=\emptyset$ and $u\not\in X_{t+2}$,
which implies the contradiction that $u\not\in X_{t+2k}$ for every $k\in\mathbb{N}_0$.
Hence, $u\in X_t$ for every $t\in\mathbb{N}_0$.
Since $u\in X_1$, there is a neighbor $v_1$ of $u$ with $v_1\in X$.
Since $u\in X_2$, there is a neighbor $v_2$ of $u$ with $v_2\in X_1$, which implies that $N_G(v_2)\subseteq X$.
Altogether, condition (2) follows.

Now, we prove the sufficiency.
Therefore, let $X$ satisfy conditions (1) and (2).
Let $(X_t)_{t\in\mathbb{N}_0}$ be the non-monotone activation process on $(G,\tau)$ starting with $X$.
If there is some $t_0$ such that $\big\{ u\in V(G):\tau(u)\in \{ 0,1\}\big\}\subseteq X_t$ for every $t\geq t_0$,
then $X_t=V(G)$ for every $t\geq t_0+1$. Therefore, it suffices to show the existence of $t_0$.
If $\tau(u)=0$ for some vertex $u$, then $u\in X_t$ for every $t\in\mathbb{N}$.
Now, let $u$, $v_1$, and $v_2$ be as in (2).
Note that $\tau(w)\in\{ 0,1\}$ for every $w\in N_G(v_1)\cup N_G(v_2)$.
If $\{ v_1\}\cup N_G(v_2)\subseteq X_t$ for some $t\in\mathbb{N}_0$,
we obtain $\{ v_2\}\cup N_G(v_1)\subseteq X_{t+1}$, and 
$\{ v_1\}\cup N_G(v_2)\subseteq X_{t+2}$, which implies $u\in X_t$ for every $t\in\mathbb{N}_0$.

Now, let $U$ be the vertex set of a component of $G\big[\big\{ u\in V(G):\tau(u)\in \{ 0,1\}\big\}\big]$ 
with $|U|\geq 2$.
If $\tau(u)=0$ for some $u\in U$, 
then a simple inductive argument over ${\rm dist}_G(u,v)$ implies 
$v\in X_t$ for every $v\in U$ and $t\geq {\rm dist}_G(u,v)$,
which implies $U\subseteq X_t$ for every $t\geq {\rm diam}(G[U])$.
Hence, we may assume that $\tau(u)=1$ for every vertex $u\in U$.

Next, let $G[U]$ be non-bipartite.
By condition (1)(a), there is some vertex $u\in X\cap N_G[U]$.
Let $v_0v_1\ldots v_{2k}$ be an odd cycle in $G[U]$, and let $u_0u_1\ldots u_\ell$ 
be a path in $G[N_G[U]]$ such that $u=u_0$, $u_\ell=v_0$, and $u_1,\ldots,u_\ell\in U$.
It follows that $u_i\in X_i$ for every $i\in \{ 0,\ldots,\ell\}$,
in particular, we have $v_0\in X_{\ell}$.
Now, it follows that $v_j,v_{2k+1-j}\in X_{\ell+j}$ for every $j\in \{ 1,\ldots,k\}$,
in particular, we have $v_k,v_{k+1}\in X_{\ell+k}$.
This implies that $v_k,v_{k+1}\in X_t$ for every $t\geq \ell+k$,
and, similarly as above, it follows that $U\subseteq X_t$ for every $t\geq \ell+k+{\rm diam}(G[U])$.

Finally, let $G[U]$ be bipartite, and let $A$, $B$, $A'$, $B'$, and $C$ be as in (1)(b). 
Since $X$ contains a vertex from $A\cup A'\cup C$ 
as well as a vertex from $B\cup B'\cup C$,
the set $X_1$ contains a vertex $a$ from $A$
and a vertex $b$ from $B$.
Let $u_0u_1\ldots u_{2k+1}$ be a path in $G[U]$
between $a=u_0$ and $b=u_{2k+1}$.
It follows that $u_i,u_{2k+1-i}\in X_{1+i}$ for every $i\in \{ 0,1,\ldots,k\}$,
in particular, $u_k,u_{k+1}\in X_{1+k}$.
This implies that $u_k,u_{k+1}\in X_t$ for every $t\geq 1+k$,
and, similarly as above, it follows that $U\subseteq X_t$ 
for every $t\geq 1+k+{\rm diam}(G[U])$,
which completes the proof.
\end{proof}

Our next result concerns the hardness of instances $(G,\tau)$
satisfying (\ref{erest}).

\begin{theorem}\label{theorem2}
For every fixed positive integer $d$,
it is NP-complete to decide,
for a given triple $(G,\tau,k)$, where 
\begin{itemize}
\item $G$ is a planar graph with vertices of degree $2$ and $3$,
in which every two vertices of degree $3$ have distance at least $d$,
\item $\tau$ is a threshold function for $G$ satisfying (\ref{erest}),
and 
\item $k$ is a positive integer,
\end{itemize}
whether $(G,\tau)$
has a non-monotone target set of order at most $k$.
\end{theorem}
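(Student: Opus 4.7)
The plan is to prove NP-completeness by first establishing membership in NP and then reducing from a standard NP-hard planar problem in two stages. Membership in NP is immediate from Theorem~\ref{theorem1}: for any candidate $X\subseteq V(G)$ with $|X|\le k$, conditions (1) and (2) can be verified in polynomial time.

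For NP-hardness I would use a two-stage reduction. Stage~1 reduces a known NP-hard planar problem --- for instance Vertex Cover on planar cubic graphs, or Planar 3-SAT --- to our problem restricted to planar graphs of maximum degree $3$ satisfying (\ref{erest}), but without the distance-$d$ condition. The restriction (\ref{erest}) allows only three vertex types: free ($\tau=0$), disjunctive ($\tau=1$), and conjunctive ($\tau=d_G$). Theorem~\ref{theorem1} says exactly what these impose on $X$: each non-bipartite $\tau{=}1$ component forces one hit in $N_G[U]$; each bipartite one forces intersections with both of $A\cup A'\cup C$ and $B\cup B'\cup C$; and an isolated $\tau{=}1$ vertex whose neighbors all have $\tau=d_G$ demands two distinct witnesses, a neighbor in $X$ and a neighbor whose entire neighborhood is in $X$. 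I would combine these primitives planarly into variable gadgets committing to a binary choice and clause (or edge) gadgets whose satisfaction cost matches the source instance, so that the minimum target-set size equals the source optimum up to a known affine offset.

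Stage~2 inflates this construction to guarantee that any two degree-$3$ vertices are at distance at least $d$. I would replace each edge between two degree-$3$ vertices of the Stage-1 graph by a subdivision path of degree-$2$ vertices of length at least $d$, most carrying threshold $1$, possibly with a few tuning vertices of threshold $0$ or $d_G$. By Theorem~\ref{theorem1}(1)(b) applied to the bipartite $\tau{=}1$ component running along the wire, the subdivision imposes an intersection condition on the two endpoints that is logically equivalent to the role of the original edge, while contributing a fixed additive cost per wire that is absorbed into the adjusted budget. Subdivision preserves planarity, maximum degree $3$, and (\ref{erest}); the distance condition follows from the choice of wire length.

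The main obstacle is the gadget design in Stage~1 under the severe restriction (\ref{erest}). Elementary constructions collapse to trivial optimization: for example, taking the source graph itself with $\tau\equiv 1$ admits a target set of size at most $2$ by Theorem~\ref{theorem1}(1)(b) alone, and taking $\tau\equiv d_G$ on a connected graph forces $X=V(G)$ by Lemma~\ref{lemma1}. The challenge is to arrange the partite sets of the bipartite $\tau{=}1$ components so that their intersection requirements span gadget boundaries and jointly encode the combinatorial structure of the source instance; Theorem~\ref{theorem1}(2) is particularly useful here, since it forces a rigid and costly local configuration around any $\tau{=}1$ vertex with only $\tau{=}d_G$ neighbors. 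A secondary technical point is to ensure that the Stage-2 wires remain \emph{transparent} in the sense that no single target vertex can discharge intersection obligations originating from several independent components.
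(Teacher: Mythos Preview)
Your plan correctly identifies NP membership via Theorem~\ref{theorem1} and correctly aims at a reduction from a planar source problem, but it stops short of an actual proof: you explicitly flag the gadget design in Stage~1 as ``the main obstacle'' and then do not resolve it. Since the gadgets \emph{are} the reduction, what you have is an outline, not a proof.

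More substantively, the shape of your plan diverges from the paper's in a way that makes your task harder than it needs to be. You propose to exploit all three mechanisms of Theorem~\ref{theorem1} --- non-bipartite components via (1)(a), bipartite ``wires'' via (1)(b), and the rigid local configuration of (2) --- and to analyse Stage~2 subdivisions as bipartite $\tau{=}1$ paths whose two partite-side obligations must be reconciled with the gadget endpoints. This is delicate: the sets $A\cup A'\cup C$ and $B\cup B'\cup C$ in (1)(b) are determined by the \emph{entire} component, not just the wire, so your claim that each wire contributes a fixed additive cost is not obviously true and would need a careful argument you have not supplied. The paper sidesteps all of this by arranging that \emph{every} component of $G[\{u:\tau(u)=1\}]$ is non-bipartite, so that only the single-hit condition (1)(a) is ever relevant and conditions (1)(b) and (2) never arise.

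Concretely, the paper reduces from a planar variant of \textsc{Satisfiability} in which each variable occurs three times. For each variable it builds a small gadget containing an odd cycle, with two distinguished literal vertices $x_i,\bar{x}_i$; for each clause it builds a triangle; literal vertices are linked to their clause triangles; and then \emph{every} edge is subdivided an even number $2\lfloor d/2\rfloor$ of times, so that all resulting paths have odd length and all cycles stay odd. The threshold is $\tau=d_G$ on the $2n$ literal vertices and $\tau=1$ everywhere else. Removing the literal vertices splits $G[\tau{=}1]$ into one non-bipartite component per variable gadget and one per clause triangle; by (1)(a) a target set must hit the closed neighbourhood of each. With $k=n$ this forces exactly one chosen vertex per variable gadget, and the only way such a vertex can simultaneously lie in the closed neighbourhood of a clause component is to be the literal $x_i$ or $\bar{x}_i$ itself. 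Satisfiability of the formula is then equivalent to the existence of a size-$n$ target set. The even subdivision simultaneously yields the distance-$d$ condition, preserves planarity and the degree set $\{2,3\}$, and keeps every $\tau{=}1$ component non-bipartite --- so your two stages collapse into one, and the wire analysis you were worried about disappears.
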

\begin{proof}
Theorem~\ref{theorem1} immediately implies that 
the considered decision problem is in NP.
In order to prove NP-completeness,
let ${\cal C}$ be an instance of {\sc Satisfiability}
consisting of the clauses $C_1,\ldots,C_m$
over the boolean variables $x_1,\ldots,x_n$, where 
\begin{itemize}
\item every clause contains two or three literals, 
\item for every boolean variable $x_i$, 
no clause contains both literals $x_i$ and $\bar{x}_i$,
exactly two clauses contain the literal $x_i$, and 
exactly one clause contains the literal $\bar{x}_i$, and
\item the bipartite graph with partite sets $\{ C_1,\ldots,C_m\}$ and $\{ x_1,\ldots,x_n\}$
in which $C_j$ is adjacent to $x_i$ if the clause $C_j$ contains $x_i$ or $\bar{x}_i$,
is planar.
\end{itemize}
It is well known \cite{da} that {\sc Satisfiability} 
remains NP-complete for such instances.

We now describe a polynomial time construction 
of $(G,\tau,k)$ as in the statement 
such that ${\cal C}$ is satisfiable
if and only if 
there is a non-monotone target set for $(G,\tau)$ 
of order at most $k$:
\begin{itemize}
\item For every $i\in [n]$, 
create a graph $G_i$ as shown in Figure~\ref{fig1}.
\item For every $j\in [m]$, 
create a triangle $C^j$.
\item For every $i\in [n]$ and $j\in [m]$,
if the positive literal $x_i$ appears in $C_j$, 
then add an edge between the vertex $x_i$ of $G_i$ and some vertex of $C^j$, and
if the negative literal $\bar{x}_i$ appears in $C_j$, 
then add an edge between the vertex $\bar{x}_i$ of $G_i$ and some vertex of $C^j$.
Ensure that the degrees of the vertices on $C^j$ remain at most $3$
by selecting different endpoints on $C^j$ for the at most $3$ edges towards $C^j$. 
\item Subdivide every edge $e$ of the graph constructed so far exactly 
$2\left\lfloor\frac{d}{2}\right\rfloor$ times, that is, 
replace each edge $e$ by a path $P_e$ of odd length at least $d$.
\end{itemize}
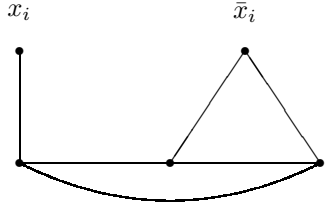
\begin{figure}[H]
\begin{center}
\unitlength 1mm 
\linethickness{0.4pt}
\ifx\plotpoint\undefined\newsavebox{\plotpoint}\fi 
\begin{picture}(41,25)(0,0)
\put(40,5){\circle*{1}}
\put(20,5){\circle*{1}}
\put(0,5){\circle*{1}}
\put(30,20){\circle*{1}}
\put(0,20){\circle*{1}}
\put(0,20){\line(0,-1){15}}
\put(0,5){\line(1,0){40}}
\put(40,5){\line(-2,3){10}}
\put(30,20){\line(-2,-3){10}}
\qbezier(0,5)(20,-5)(40,5)
\put(0,25){\makebox(0,0)[]{$x_i$}}
\put(30,25){\makebox(0,0)[]{$\bar{x}_i$}}
\end{picture}
\end{center}
\caption{The variable gadget $G_i$.} \label{fig1}
\end{figure}
This completes the description of $G$.
Note that $G$ has order $5n+3m+2\left\lfloor\frac{d}{2}\right\rfloor(9n+3m)$ 
and is as required in the statement.
It remains to specify $\tau$ and $k$:
\begin{itemize}
\item Let $\tau(x_i) = d_G(x_i)$ and $\tau(\bar{x}_i)=d_G(\bar{x}_i)$ for every $i\in [n]$, and let $\tau(v)=1$ for all remaining vertices.
\item Let $k=n$.
\end{itemize}
Note that 
$G$ has no edge $uv$ with $\tau(u)=d_G(u)$ and $\tau(v)=d_G(v)$,
and that each component of 
$G\big[\big\{ u\in V(G):\tau(u)=1\big\}\big]$ 
is of order at least $2$ and not bipartite.
Therefore, 
non-monotone target sets for $(G,\tau)$ 
are characterized by condition (1)(a)
from Theorem~\ref{theorem1}.

If ${\cal C}$ has a satisfying truth assignment $t$,
then 
$$X=\{ x_i:i\in [n]\mbox{ and $x_i$ is true under $t$}\}
\cup \{ \bar{x}_i:i\in [n]\mbox{ and $x_i$ is false under $t$}\}$$
is a non-monotone target set of order $k=n$ for $(G,\tau)$ 
by Theorem~\ref{theorem1}.

Conversely, if $X$ is a non-monotone target set 
of order at most $k=n$ for $(G,\tau)$,
then, by Theorem~\ref{theorem1} and since $k=n$,
the set $X$ contains exactly one vertex from $\bigcup\limits_{e\in E(G_i)}V(P_e)$ 
for every $i\in [n]$.
Hence, the intersections $X\cap \{ x_i,\bar{x}_i\}$
define a partial truth assignment.
Let the truth assignment $t$ extend this partial truth assignment.
Again by Theorem~\ref{theorem1},
the set $X$ contains at least one vertex from $N_G[V(C^j)]$,
which implies that $t$ is satisfying.
\end{proof}

Our next result is based on the commonly used notion of a {\it nice tree decomposition} \cite{kl}.
Let~$G$ be a graph.
A {\it tree decomposition} of $G$ is a pair $\left(T,(X_t)_{t\in V(T)}\right)$, where 
\begin{itemize}
\item $T$ is a tree, 
\item $X_t\subseteq V(G)$ for every node $t$ of $T$,
\item $\big\{ t\in V(T): u\in X_t\big\}$ induces a non-empty subtree of $T$ for every vertex $u$ of $G$, and,
\item for every edge $uv$ of $G$, there is some node $t$ of $T$ with $u,v\in X_t$.
\end{itemize}
The {\it width} of the tree decomposition is $\max\big\{|X_t|:t\in V(T)\big\}-1$.
The tree decomposition is {\it nice} if $T$ is a rooted binary tree, 
and every node $t$ of $T$ is of one of the following types:
\begin{itemize}
\item $t$ is a leaf of $T$, and $X_t=\emptyset$ ({\it leaf node}).
\item $t$ has two children $t'$ and $t''$, and $X_t=X_{t'}=X_{t''}$ ({\it join node}).
\item $t$ has a unique child $t'$, and 
\\ either $|X_t\setminus X_{t'}|=1$ and $|X_{t'}\setminus X_t|=0$ ({\it introduce node}),
\\ or $|X_{t'}\setminus X_t|=1$ and $|X_t\setminus X_{t'}|=0$ ({\it forget node}).
\end{itemize}
The proof of our next result 
is based on the reduction described in Lemma~\ref{lemma1}
and dynamic programming along a nice tree decomposition.

\begin{theorem}\label{theorem3}
Given 
\begin{itemize}
\item a pair $(G,\tau)$, where 
$G$ is a graph of order $n(G)$,
and $\tau$ is a threshold function for $G$ 
satisfying (\ref{erest}),
and 
\item a nice tree decomposition $\left(T,(X_t)_{t\in V(T)}\right)$ of $G$ of width $w$,
where $T$ has order $n(T)$,
\end{itemize}
the minimum order of a non-monotone target set for $(G,\tau)$
can be determined in time 
$$2^{5w}\cdot n(T) \cdot {\rm poly}(n(G)).$$
\end{theorem}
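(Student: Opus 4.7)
The plan is to combine the reduction from Lemma~\ref{lemma1} with dynamic programming on the nice tree decomposition, guided by the structural characterization of Theorem~\ref{theorem1}. First, I would iteratively apply Lemma~\ref{lemma1}: for each component $U$ of order at least $2$ in $G[\{u : \tau(u) = d_G(u)\}]$, the closed neighborhood $N_G[U]$ is forced into every target set and can be committed to the output, after which we pass to the reduced instance $(G',\tau')$. Restricting each bag $X_t$ to $V(G')$ yields a tree decomposition of $G'$ of width at most $w$, which can be re-niceified in polynomial time. After this preprocessing, the reduced instance satisfies the hypothesis of Theorem~\ref{theorem1}, so non-monotone target sets are characterized by conditions (1) and (2).

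Next, I would run a dynamic program on the nice tree decomposition of $(G',\tau')$. For each node $t$, the DP table is indexed by a state recording, for every $v\in X_t$, a constant amount of information: whether $v$ lies in the candidate set $X$; if $\tau'(v)\in\{0,1\}$, a label giving the role of $v$ in its partial component of $G'[\{u:\tau'(u)\in\{0,1\}\}]$, namely a side in $\{A,B\}$ together with a ``non-bipartite'' flag; and two satisfaction bits recording whether the intersection requirements of condition~(1) for $v$'s partial component have already been witnessed in the subtree below $t$. Using a constant number of bits per bag vertex yields a state space of size $2^{O(w)}$, with the precise constant adjustable to reach the claimed $2^{5w}$. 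The DP value stored is the minimum number of selected vertices in the subtree consistent with the state. Transitions at leaf, introduce, forget, and join nodes follow the standard pattern: an introduce node merges the new vertex into existing partial components through its bag adjacencies and constrains its coloring; a forget node removes a vertex and, when it is the last remaining representative of its partial component in the bag, demands that the recorded satisfaction bits be fully set; a join node combines child tables by taking the logical OR of the satisfaction bits of components identified across the two sides. Condition~(2) is local at each relevant $u$ and can be verified via auxiliary per-vertex bits tracking whether a suitable $v_1$ has been placed in $X$ and whether a suitable $v_2$ with $N_G(v_2)\subseteq X$ has been witnessed before $u$ is forgotten.

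The main obstacle is keeping the state space within $2^{O(w)}$ while implicitly tracking the partition of bag vertices into partial components of $G'[\{u:\tau'(u)\in\{0,1\}\}]$, since a naive enumeration of set partitions would incur a Bell-number factor. My plan is to avoid an explicit partition by piggybacking on the bipartite coloring together with the adjacency relation inside each bag: two bag vertices in the same partial component are forced to carry compatible colors and identical satisfaction bits, while vertices in distinct partial components may carry independent states, and the updates at introduce and join nodes are designed so that this consistency is maintained automatically. The final answer is read off at the root bag and combined with the size of the forced set produced by the preprocessing, giving the claimed running time of $2^{5w}\cdot n(T)\cdot\mathrm{poly}(n(G))$.
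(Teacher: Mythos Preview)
Your overall strategy---reduce via Lemma~\ref{lemma1}, then use the characterization of Theorem~\ref{theorem1} to turn the problem into a system of intersection constraints, then run a DP on the nice tree decomposition---is exactly the paper's approach. The gap is in your handling of what you call the ``main obstacle.'' You attempt to track \emph{partial} components of $G'[\{u:\tau'(u)\in\{0,1\}\}]$ dynamically during the DP and worry about a Bell-number blowup; your proposed fix (piggybacking on a bipartite $2$-coloring together with bag adjacencies) does not work as stated, because two bag vertices may lie in the same component while being non-adjacent inside the bag, their only connection running through already-forgotten vertices. In that situation nothing in your state tells you that their satisfaction bits must agree, so you cannot correctly equate or merge those bits at forget and join nodes.

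The paper avoids this problem by a simple observation you are missing: the relevant components are properties of the \emph{global} graph and can be precomputed once, in polynomial time, before any DP begins. Concretely, the paper computes the vertex sets $U_1,\dots,U_p$ of the non-bipartite components, the vertex sets $U'_1,\dots,U'_q$ of the bipartite components together with the global sets $\bar A(U'_i)$ and $\bar B(U'_i)$, and the vertices $u_1,\dots,u_r$ of condition~(2). Every bag vertex then carries a fixed global label, and at most $|X_t|\le w+1$ of these global objects can meet any single bag. The DP state at $t$ is just the pair $(S,B)$, where $S=X\cap X_t$ and $B$ holds one or two bits per global object meeting $X_t$ (recording whether the corresponding intersection requirement has already been witnessed below $t$), plus one bit per bag vertex $u$ with $\tau(u)=d_G(u)$ recording whether $N_{G_t}(u)\subseteq X$. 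Because the labels are global and never change, a join node simply ORs the bits for the same label from both children; no partition of the bag is ever enumerated. This is what yields the $2^{w+1}\cdot 4^{w+1}$ bound on the number of states and the claimed $2^{5w}$ running time. Once you insert this global precomputation step, the rest of your outline goes through.
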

\begin{proof}
Let $(G,\tau)$ and $\left(T,(X_t)_{t\in V(T)}\right)$ be as in the statement.
Applying the polynomial time reduction described in Lemma~\ref{lemma1}, 
we may assume that $G$ has no edge $uv$ with $\tau(u)=d_G(u)$ and $\tau(v)=d_G(v)$.
Since this reduction only involves the removal of vertices from $G$,
the given initially nice tree decomposition
can be modified in time $n(T) \cdot {\rm poly}(n(G))$
in such a way that it stays nice.
Possibly adding $O(n(G))$ further forget nodes to $T$, 
we may assume that $X_{t_0}=\emptyset$, where $t_0$ is the root of $T$.
For every node $t$ of $T$, 
let $Z_t$ denote the set of nodes of $T$ that contains $t$ 
as well as all its descendants,
and, let $G_t$ be the subgraph of $G$ induced by $\bigcup\limits_{s\in Z_t} X_s$.

Let $G'$ arise from the graph
$$G\big[\big\{ u\in V(G):\tau(u)\in \{ 0,1\}\big\}\big]$$ 
by removing all components that have order $1$ or contain a vertex $u$ with $\tau(u)=0$.

Let 
\begin{itemize}
\item $U_1,\ldots,U_p$ be the vertex sets of the non-bipartite components of $G'$, and let 
\item $U'_1,\ldots,U'_q$ be the vertex sets of the bipartite components of $G'$.
\end{itemize}
For $U=U'_i$ for some $i\in [q]$, and $A$, $B$, $A'$, $B'$, and $C$ as in Theorem~\ref{theorem1}~(1)(b),
let 
\begin{itemize}
\item $\bar{A}(U)=A\cup A'\cup C$ and $\bar{B}(U)=B\cup B'\cup C.$
\end{itemize}
Let 
\begin{itemize}
\item $u_1,\ldots,u_r$ be the vertices $u$ of $G$ with $\tau(u)=1$ such that $\tau(v)=d_G(v)$ for every $v\in N_G(u)$.
\end{itemize}
Clearly, all of $U_1,\ldots,U_p,U'_1,\bar{A}(U_1'),\bar{B}(U_1'),\ldots,U'_q,\bar{A}(U_q'),\bar{B}(U_q'),u_1,\ldots,u_r$
can be determined in ${\rm poly}(n(G))$ time.

For every node $t$ of $T$, let 
\begin{eqnarray*}
P_t &=& \{ i\in [p]:U_i\cap X_t\not=\emptyset\},\\
Q_t &=&  \{ i\in [q]:U'_i\cap X_t\not=\emptyset\},\\
R_t &=&  \{ i\in [r]:u_i\in X_t\},\mbox{ and }\\
D_t &=& \{ u\in X_t:\tau(u)=d_G(u)\}.
\end{eqnarray*}
Let $t$ be a node of $T$.
Let $V^-=V(G_t)\setminus X_t$.
Note that $N_G(u)=N_{G_t}(u)$ for every vertex $u\in V^-$.

A {\it pattern} for $t$ is a $5$-tuple
$\left(S,B_{(1)(a)},B_{(1)(b)},B_{(2)},B_{d}\right)$, where
\begin{enumerate}[(i)]
\item $S\subseteq X_t$,
\item $B_{(1)(a)}=\big(b(U_i)\big)_{i\in P_t}\in \{ 0,1\}^{|P_t|}$,
\item $B_{(1)(b)}=\big(\big(b_{\bar{A}}(U'_i),b_{\bar{B}}(U'_i)\big)\big)_{i\in Q_t}\in \big(\{ 0,1\}^2\big)^{|Q_t|}$,
\item $B_{(2)}=\big(\big(b_{v_1}(u_i),b_{v_2}(u_i)\big)\big)_{i\in R_t}\in \big(\{ 0,1\}^2\big)^{|R_t|}$, and
\item $B_{d}=\big(b(u)\big)_{u\in D_t}\in \{ 0,1\}^{|D_t|}$.
\end{enumerate}
Intuitively, the set $S$ fixes the intersection 
of a potential non-monotone target set with $X_t$,
and the suitably formatted $0/1$-vector
$\left(B_{(1)(a)},B_{(1)(b)},B_{(2)},B_{d}\right)$
stores bits of information related to the conditions 
in Theorem~\ref{theorem1}.
For a pattern $\left(S,B_{(1)(a)},B_{(1)(b)},B_{(2)},B_{d}\right)$ for $t$,
let 
$$x\left(t,S,B_{(1)(a)},B_{(1)(b)},B_{(2)},B_{d}\right)$$
be the minimum of $|X\setminus X_t|$
over all subsets $X$ of $V(G_t)$ such that 
\begin{enumerate}[(C1)]
\item $X\cap X_t=S$.
\item $X\cap N_G[U_i]\not=\emptyset$ for every $i\in [p]$ with $U_i\subseteq V^-$.
\item $X\cap \bar{A}(U)\not=\emptyset$ and $X\cap \bar{B}(U)\not=\emptyset$ for every $i\in [q]$ with $U'_i\subseteq V^-$.
\item For every $i\in [r]$ with $u_i\in V^-$, there are vertices $v_1,v_2\in N_{G_t}(u_i)$
such that $\{ v_1\}\cup N_{G_t}(v_2)\subseteq X$, 
and, if $v_2\in X_t$, then $b(v_2)=1$.
\item $X\cap N_G[U_i]\cap V^-\not=\emptyset$ for every $i\in P_t$ with $b(U_i)=1$. 

(Note that, if $b(U_i)=0$, then $X\cap N_G[U_i]\cap V^-$ is not required to be empty.
In other words, $b(U_i)=1$ is not equivalent to $X\cap N_G[U_i]\cap V^-\not=\emptyset$,
but ``$b(U_i)=1$'' just imposes one more condition than ``$b(U_i)=0$''.)
\item $X\cap \bar{A}(U_i')\cap V^-\not=\emptyset$ for every $i\in Q_t$ with $b_{\bar{A}}(U'_i)=1$.
\item $X\cap \bar{B}(U_i')\cap V^-\not=\emptyset$ for every $i\in Q_t$ with $b_{\bar{B}}(U'_i)=1$.
\item $X\cap V^-$ contains a neighbor of $u_i$ for every $i\in R_t$ with $b_{v_1}(u_i)=1$.
\item $V^-$ contains a neighbor $v_2$ of $u_i$ with $N_{G_t}(v_2)\subseteq X$ for every $i\in R_t$ with $b_{v_2}(u_i)=1$.
\item $N_{G_t}(u)\subseteq X$ for every $u\in D_t$ with $b(u)=1$.
\end{enumerate}
If there is no set $X$ satisfying these conditions, 
then let $x\left(t,S,B_{(1)(a)},B_{(1)(b)},B_{(2)},B_{d}\right)=\infty$.

If 
$${\cal P}=\left(S,B_{(1)(a)},B_{(1)(b)},B_{(2)},B_{d}\right)
\,\,\,\,\,\,\,\,\mbox{ and }\,\,\,\,\,\,\,\,
{\cal P}'=\left(S,B'_{(1)(a)},B'_{(1)(b)},B'_{(2)},B'_{d}\right)$$ 
are two patterns for $t$
such that ${\cal P}\geq {\cal P}'$
pointwise,
and some set $X$ satisfies (C1) to (C10) for the first pattern ${\cal P}$,
then $X$ also satisfies (C1) to (C10) for the second pattern ${\cal P}'$.
This immediately implies 
$$x\left(S,B_{(1)(a)},B_{(1)(b)},B_{(2)},B_{d}\right)\geq 
x\left(S,B'_{(1)(a)},B'_{(1)(b)},B'_{(2)},B'_{d}\right).$$
Since $|P_t|+|Q_t|+|R_t|+|D_t|\leq |X_t|\leq w+1$,
there are at most 
$2^{w+1}4^{w+1}=8^{w+1}$ 
patterns for $t$.
Furthermore, if $t_0$ is the root of $T$, then, 
since $X_t=\emptyset$ and $G_t=G$,
we have 
$P_{t_0}=Q_{t_0}=R_{t_0}=D_{t_0}=\emptyset$,
and 
$x\left(t_0,\emptyset,\emptyset,\emptyset,\emptyset,\emptyset\right)$
is the minimum order of a non-monotonous target set for $(G,\tau)$.
Similarly, if $t$ is a leaf of $T$,
then $\left(\emptyset,\emptyset,\emptyset,\emptyset,\emptyset\right)$
is the only choice for 
$\left(S,B_{(1)(a)},B_{(1)(b)},B_{(2)},B_{d}\right)$,
and 
$x\left(t,\emptyset,\emptyset,\emptyset,\emptyset,\emptyset\right)=0$.
In order to complete the proof, 
it suffices to explain how to determine 
the values  
$x\left(t,S,B_{(1)(a)},B_{(1)(b)},B_{(2)},B_{d}\right)$
recursively in an efficient way
for every node $t$ of $T$ that is not a leaf.
How to obtain the stated running time is explained at the end of the proof.

\begin{claim}\label{claimjoin}
Let $t$ be a join node with the two children $t'$ and $t''$.
If $\left(S,B_{(1)(a)},B_{(1)(b)},B_{(2)},B_{d}\right)$ 
is a pattern for $t$,
then $x\left(t,S,B_{(1)(a)},B_{(1)(b)},B_{(2)},B_{d}\right)$
is the minimum value of 
\begin{eqnarray}\label{eclaimjoin}
x\left(t',S,B'_{(1)(a)},B'_{(1)(b)},B'_{(2)},B_{d}\right)
+x\left(t'',S,B''_{(1)(a)},B''_{(1)(b)},B''_{(2)},B_{d}\right),
\end{eqnarray}
where 
$\left(S,B'_{(1)(a)},B'_{(1)(b)},B'_{(2)},B_d\right)$ is a pattern for $t'$ with
$$\left(B'_{(1)(a)},B'_{(1)(b)},B'_{(2)}\right)
=\left(\big(b'(U_i)\big)_{i\in P_t},\big(\big(b'_{\bar{A}}(U'_i),b'_{\bar{B}}(U'_i)\big)\big)_{i\in Q_t},\big(\big(b'_{v_1}(u_i),b'_{v_2}(u_i)\big)\big)_{i\in R_t}\right),$$
and 
$\left(S,B''_{(1)(a)},B''_{(1)(b)},B''_{(2)},B_d\right)$ is a pattern for $t''$ with
$$\left(B''_{(1)(a)},B''_{(1)(b)},B''_{(2)}\right)
=\left(\big(b''(U_i)\big)_{i\in P_t},\big(\big(b''_{\bar{A}}(U'_i),b''_{\bar{B}}(U'_i)\big)\big)_{i\in Q_t},\big(\big(b''_{v_1}(u_i),b''_{v_2}(u_i)\big)\big)_{i\in R_t}\right)$$
such that 
\begin{itemize}
\item $b(U_i)\leq b'(U_i)+b''(U_i)$ for every $i\in P_t$.
\item $b_{\bar{A}}(U'_i)\leq b'_{\bar{A}}(U'_i)+b''_{\bar{A}}(U'_i)$ for every $i\in Q_t$.
\item $b_{\bar{B}}(U'_i)\leq b'_{\bar{B}}(U'_i)+b''_{\bar{B}}(U'_i)$ for every $i\in Q_t$.
\item $b_{v_1}(u_i)\leq b'_{v_1}(u_i)+b''_{v_1}(u_i)$ for every $i\in R_t$.
\item $b_{v_2}(u_i)\leq b'_{v_2}(u_i)+b''_{v_2}(u_i)$ for every $i\in R_t$.
\end{itemize}
\end{claim}
\begin{proof}[Proof of Claim \ref{claimjoin}]
Note that $G_t=G_{t'}\cup G_{t''}$, 
$V(G_{t'})\cap V(G_{t''})=X_t$,
and 
$V(G_t)\setminus X_t$
is the disjoint union of 
$V(G_{t'})\setminus X_{t'}$
and
$V(G_{t''})\setminus X_{t''}$.

If $X\subseteq V(G_t)$ 
satisfies (C1) to (C10)
for $\left(t,S,B_{(1)(a)},B_{(1)(b)},B_{(2)},B_{d}\right)$,
then
there are choices of 
$\left(t',S,B'_{(1)(a)},B'_{(1)(b)},B'_{(2)},B_{d}\right)$
and 
$\left(t'',S,B''_{(1)(a)},B''_{(1)(b)},B''_{(2)},B_{d}\right)$
as in the statement such that
\begin{itemize}
\item  
$X\cap V(G_{t'})$ 
satisfies (C1) to (C10)
for $\left(t',S,B'_{(1)(a)},B'_{(1)(b)},B'_{(2)},B_{d}\right)$
and 
\item $X\cap V(G_{t''})$ 
satisfies (C1) to (C10)
for $\left(t'',S,B''_{(1)(a)},B''_{(1)(b)},B''_{(2)},B_{d}\right)$.
\end{itemize}
If, for instance, $b(U_i)=1$ for some $i\in P_t$,
then, by (C5), 
the set $X\cap (V(G_t)\setminus X_t)$ contains some vertex from $N_G[U_i]$,
which either belongs to $X\cap V(G_{t'})$,
in which case $b'(U_i)$ can be set to $1$,
or to $X\cap V(G_{t''})$,
in which case $b''(U_i)$ can be set to $1$.

Conversely,
for all choices of 
$\left(S,B'_{(1)(a)},B'_{(1)(b)},B'_{(2)},B_{d}\right)$
and 
$\left(S,B''_{(1)(a)},B''_{(1)(b)},B''_{(2)},B_{d}\right)$
as in the statement, if
\begin{itemize}
\item  
$X\cap V(G_{t'})$ 
satisfies (C1) to (C10)
for $\left(t',S,B'_{(1)(a)},B'_{(1)(b)},B'_{(2)},B_{d}\right)$
and 
\item  $X\cap V(G_{t''})$ 
satisfies (C1) to (C10)
for $\left(t'',S,B''_{(1)(a)},B''_{(1)(b)},B''_{(2)},B_{d}\right)$,
\end{itemize}
then
$X'\cup X''$
satisfies (C1) to (C10)
for $\left(t,S,B_{(1)(a)},B_{(1)(b)},B_{(2)},B_{d}\right)$.

This completes the proof of the claim.
\end{proof}

\begin{claim}\label{claiminsert}
If $t$ is an insert node with child $t'$, 
$X_t\setminus X_{t'}=\{ u\}$, and 
$\left(S,B_{(1)(a)},B_{(1)(b)},B_{(2)},B_{d}\right)$ 
is a pattern for $t$ such that $x\left(t,S,B_{(1)(a)},B_{(1)(b)},B_{(2)},B_{d}\right)<\infty$,
then the following statements hold.
\begin{enumerate}[(1)]
\item If $u\not\in S$, then $b(v)=0$ for every $v\in D_t\cap N_G(u)$.
\item If $i\in P_t\setminus P_{t'}$, then $b(U_i)=0$.
\item If $i\in Q_t\setminus Q_{t'}$, then $b_{\bar{A}}(U'_i)=b_{\bar{B}}(U'_i)=0$.
\item If $i\in R_t\setminus R_{t'}$, then $b_{v_1}(u)=b_{v_2}(u)=0$.
\item If $u\in D_t$ and $b(u)=1$, then $N_{G_t}(u)\subseteq S$.
\item $x\left(t,S,B_{(1)(a)},B_{(1)(b)},B_{(2)},B_{d}\right)
=
x\left(t',S',B'_{(1)(a)},B'_{(1)(b)},B'_{(2)},B'_{d}\right),$
where
$S'=S\setminus \{ u\}$,
$B'_{(1)(a)}=B_{(1)(a)}\mid_{P_{t'}}$,
$B'_{(1)(b)}=B_{(1)(b)}\mid_{Q_{t'}}$,
$B'_{(2)}=B_{(2)}\mid_{R_{t'}}$, and
$B'_{d}=B_{d}\setminus \{ u\}$.
\end{enumerate}
\end{claim}
\begin{proof}[Proof of Claim \ref{claiminsert}]
Note that $G_{t'}=G_t-u$, 
$N_{G_t}(u)\subseteq X_t$,
and $V(G_t)\setminus X_t=V(G_{t'})\setminus X_{t'}$.
Condition (C10) clearly implies (1).
If $i\in P_t\setminus P_{t'}$, then $U_i\cap X_t=\{ u\}$,
and $V(G_t)\setminus X_t$ contains no vertex from $N_G[U_i]$,
which implies (2).
Similar arguments imply (3) and (4).
If $u\in D_t$ and $b(u)=1$, then 
$N_{G_t}(u)\subseteq X_t$ and (C10) imply (5).
Now, the stated equality (6) follows from 
$V(G_t)\setminus X_t=V(G_{t'})\setminus X_{t'}$,
which completes the proof of the claim.
\end{proof}

For the following Claims~\ref{claimforget1} to \ref{claimforget4}, 
let $t$ be a forget node of $t$ with child $t'$,
let $X_{t'}\setminus X_t=\{ u\}$, and
let $\left(S,B_{(1)(a)},B_{(1)(b)},B_{(2)},B_{d}\right)$ 
be a pattern for $t$.
By definition,
$P_t\subseteq P_{t'}$,
$Q_t\subseteq Q_{t'}$,
$R_t\subseteq R_{t'}$, and
$D_t\subseteq D_{t'}$.
We consider various patterns 
\begin{eqnarray*}
&&\left(S',B'_{(1)(a)},B'_{(1)(b)},B'_{(2)},B'_d\right)\\
&=&\left(S',\big(b'(U_i)\big)_{i\in P_{t'}},\big(\big(b'_{\bar{A}}(U'_i),b'_{\bar{B}}(U'_i)\big)\big)_{i\in Q_{t'}},\big(\big(b'_{v_1}(u_i),b'_{v_2}(u_i)\big)\big)_{i\in R_{t'}},
\big(b'(u)\big)_{u\in D_{t'}}\right)
\end{eqnarray*}
for $t'$.

\begin{claim}\label{claimforget1}
If $u\in U_i$ for some $i\in P_t$, then
$x\left(t,S,B_{(1)(a)},B_{(1)(b)},B_{(2)},B_{d}\right)$
is the minimum of the two values
\begin{itemize}
\item $x\left(t',S,B_{(1)(a)},B_{(1)(b)},B_{(2)},B_{d}\right)$ and
\item $x\left(t',S\cup \{ u\},B'_{(1)(a)},B_{(1)(b)},B_{(2)},B_{d}\right)$,
where $b'(U_i)=0$ and the remaining entries of $B'_{(1)(a)}$ 
are as in $B_{(1)(a)}$.
\end{itemize}
\end{claim}
\begin{proof}[Proof of Claim \ref{claimforget1}]
Note that $i\in P_{t'}$, 
that is, the set $X_{t'}$ contains a vertex from $U_i$ 
that is different from $u$.

If $X \subseteq V(G_t)$ 
satisfies (C1) to (C10) 
for $\left(t,S,B_{(1)(a)},B_{(1)(b)},B_{(2)},B_{d}\right)$,
then 
\begin{itemize}
\item either $u\not\in X$, and $X$ satisfies (C1) to (C10) 
for $\left(t',S,B_{(1)(a)},B_{(1)(b)},B_{(2)},B_{d}\right)$, 
\item or $u\in X$, and $X$ satisfies (C1) to (C10) 
for $\left(t',S\cup \{ u\},B'_{(1)(a)},B_{(1)(b)},B_{(2)},B_{d}\right)$.
\end{itemize}
Conversely,
if $X'\subseteq V(G_{t'})$
satisfies (C1) to (C10) 
for $\left(t',S,B_{(1)(a)},B_{(1)(b)},B_{(2)},B_{d}\right)$,
then $u\not\in X'$, and 
$X'$
satisfies (C1) to (C10) 
for $\left(t,S,B_{(1)(a)},B_{(1)(b)},B_{(2)},B_{d}\right)$.
Furthermore, 
if $X''\subseteq V(G_{t'})$
satisfies (C1) to (C10) 
for $\left(t',S\cup \{ u\},B'_{(1)(a)},B_{(1)(b)},B_{(2)},B_{d}\right)$,
then $u\in X''$, and 
$X''$
satisfies (C1) to (C10) 
for $\left(t,S,B_{(1)(a)},B_{(1)(b)},B_{(2)},B_{d}\right)$,
regardless of the value of $b(U_i)$.
These observations imply the claim.
\end{proof}

\begin{claim}\label{claimforget2}
If $u\in U_i$ for some $i\in P_{t'}\setminus P_t$, then
$x\left(t,S,B_{(1)(a)},B_{(1)(b)},B_{(2)},B_{d}\right)$
is the minimum of the two values
\begin{itemize}
\item $x\left(t',S,B'_{(1)(a)},B_{(1)(b)},B_{(2)},B_{d}\right)$, 
where 
$$b'(U_i)=
\begin{cases}
1, & \mbox{ if $S$ contains no vertex from $N_G[U_i]$, and}\\
0, & \mbox{ otherwise,}
\end{cases}
$$
and the remaining entries of $B'_{(1)(a)}$ 
are as in $B_{(1)(a)}$, and 
\item $x\left(t',S\cup \{ u\},B''_{(1)(a)},B_{(1)(b)},B_{(2)},B_{d}\right)$, 
where 
$b''(U_i)=0$ 
and the remaining entries of $B''_{(1)(a)}$ 
are as in $B_{(1)(a)}$.
\end{itemize}
\end{claim}
\begin{proof}[Proof of Claim \ref{claimforget2}]
Since $i\in P_{t'}\setminus P_t$,
the vertex $u$ is the only vertex from $U_i$ in $X_{t'}$,
which implies $N_G[U_i]\subseteq V(G_t)$.

If $X\subseteq V(G_t)$ 
satisfies (C1) to (C10) 
for $\left(t,S,B_{(1)(a)},B_{(1)(b)},B_{(2)},B_{d}\right)$,
then 
\begin{itemize}
\item either $u\not\in X$, and $S$ satisfies (C1) to (C10) 
for $\left(t',S,B'_{(1)(a)},B_{(1)(b)},B_{(2)},B_{d}\right)$, 
\item or $u\in X$, and $S\cup \{ u\}$ satisfies (C1) to (C10) 
for $\left(t',S\cup \{ u\},B''_{(1)(a)},B_{(1)(b)},B_{(2)},B_{d}\right)$.
\end{itemize}
Conversely,
if $X'\subseteq V(G_{t'})$
satisfies (C1) to (C10) 
for $\left(t',S,B'_{(1)(a)},B_{(1)(b)},B_{(2)},B_{d}\right)$,
then $u\not\in X'$, and 
$X'$
satisfies (C1) to (C10) 
for $\left(t,S,B_{(1)(a)},B_{(1)(b)},B_{(2)},B_{d}\right)$,
and,
if $X''\subseteq V(G_{t'})$
satisfies (C1) to (C10) 
for $\left(t',S\cup \{ u\},B''_{(1)(a)},B_{(1)(b)},B_{(2)},B_{d}\right)$,
then $u\in X''$, and 
$X''$
satisfies (C1) to (C10) 
for $\left(t,S,B_{(1)(a)},B_{(1)(b)},B_{(2)},B_{d}\right)$,
regardless of the value of $b(U_i)$.
These observations imply the claim.
\end{proof}

If 
$u\in U_i$ for some $i\in Q_t$
or
$u\in U_i$ for some $i\in Q_{t'}\setminus Q_t$,
then there are statements that are completely analogous 
to Claims~\ref{claimforget1} and \ref{claimforget2}, and thus, we omit the details.

\begin{claim}\label{claimforget3}
If $u=u_i$ for some $i\in R_{t'}$, then
$x\left(t,S,B_{(1)(a)},B_{(1)(b)},B_{(2)},B_{d}\right)$
equals the value of 
$x\left(t',S\cup \{ u\},B_{(1)(a)},B_{(1)(b)},B'_{(2)},B_{d}\right)$,
where
$$b'_{v_1}(u_i)=
\begin{cases}
1, & \mbox{ if $S$ contains no neighbor of $u_i$, and}\\
0, & \mbox{ otherwise,}
\end{cases}
$$
and
$$b'_{v_2}(u_i)=
\begin{cases}
1, & \mbox{ if $X_t$ contains no neighbor $v_2$ of $u_i$ with $b(v_2)=1$, and}\\
0, & \mbox{ otherwise}.
\end{cases}
$$
\end{claim}
\begin{proof}[Proof of Claim \ref{claimforget2}]
Note that $i\not\in R_t$.
The stated equality follows immediately from (C4) applied to $t$
as well as (C8) and (C9) applied to $t'$.
\end{proof}

\begin{claim}\label{claimforget4}
If $u\in D_{t'}$, then
$x\left(t,S,B_{(1)(a)},B_{(1)(b)},B_{(2)},B_{d}\right)$
is the minimum of the four values
\begin{itemize}
\item $x\left(t',S,B_{(1)(a)},B_{(1)(b)},B_{(2)},B'_{d}\right)$,
where
\begin{itemize}
\item $b'(u)=0$ and the remaining entries of $B'_d$ 
are as in $B_d$,
\end{itemize}
\item $x\left(t',S,B_{(1)(a)},B_{(1)(b)},B'_{(2)},B'_{d}\right)$,
where 
\begin{itemize}
\item $b'(u)=1$ and the remaining entries of $B'_d$ 
are as in $B_d$,
\item $b'_{v_2}(u_i)=0$ for every $i\in R_t$ such that $u$ is a neighbor of $u_i$, and 
the remaining entries of $B'_{(2)}$ 
are as in $B_{(2)}$,
\end{itemize}
\item $x\left(t',S\cup \{ u\},B'_{(1)(a)},B'_{(1)(b)},B'_{(2)},B'_{d}\right)$,
where 
\begin{itemize}
\item $b'(u)=0$ and the remaining entries of $B'_d$ are as in $B_d$,
\item $b'(U_i)=0$ for every $i\in P_t$ with $u\in N_G[U_i]$ and 

the remaining entries of $B'_{(1)(a)}$ are as in $B_{(1)(a)}$,
\item $b'_{\bar{A}}(U'_i)=0$ for every $i\in Q_t$ with $u\in \bar{A}(U'_i)$,

$b'_{\bar{B}}(U'_i)=0$ for every $i\in Q_t$ with $u\in \bar{B}(U'_i)$,
and 

the remaining entries of $B'_{(1)(b)}$ are as in $B_{(1)(b)}$,
\item $b'_{v_1}(u_i)=0$ for every $i\in R_t$ 
such that $u$ is a neighbor of $u_i$, 
and 

the remaining entries of $B'_{(2)}$ are as in $B_{(2)}$.
\end{itemize}
\item $x\left(t',S\cup \{ u\},B'_{(1)(a)},B'_{(1)(b)},B'_{(2)},B'_{d}\right)$,
where 
\begin{itemize}
\item $b'(u)=1$ and the remaining entries of $B'_d$ are as in $B_d$,
\item $b'(U_i)=0$ for every $i\in P_t$ with $u\in N_G[U_i]$ and 

the remaining entries of $B'_{(1)(a)}$ are as in $B_{(1)(a)}$,
\item $b'_{\bar{A}}(U'_i)=0$ for every $i\in Q_t$ with $u\in \bar{A}(U'_i)$,

$b'_{\bar{B}}(U'_i)=0$ for every $i\in Q_t$ with $u\in \bar{B}(U'_i)$,
and 

the remaining entries of $B'_{(1)(b)}$ are as in $B_{(1)(b)}$,
\item $b'_{v_1}(u_i)=0$ for every $i\in R_t$ 
such that $u$ is a neighbor of $u_i$, 

$b'_{v_2}(u_i)=0$ for every $i\in R_t$ 
such that $u$ is a neighbor of $u_i$, 
and 

the remaining entries of $B'_{(2)}$ are as in $B_{(2)}$.
\end{itemize}
\end{itemize}
\end{claim}
\begin{proof}[Proof of Claim \ref{claimforget4}]
Note that 
$P_t=P_{t'}$,
$Q_t=Q_{t'}$, 
$R_t=R_{t'}$, and
$D_{t'}=D_t\cup \{ u\}$.
The four cases correspond to the four possibilites
\begin{itemize}
\item $u\not\in X$ and $b(u)=0$,
\item $u\not\in X$ and $b(u)=1$,
\item $u\in X$ and $b(u)=0$, and
\item $u\in X$ and $b(u)=1$,
\end{itemize}
and they encode the consequences 
for  
$U_i$ with $i\in P_t$,
$U'_i$ with $i\in Q_t$, and
$u_i$ with $i\in R_t$ 
for those elements affected by $u$.
Similar obvious observations as in the proof of Claim~\ref{claimforget2}
complete the proof of this claim.
\end{proof}

\begin{claim}\label{claimforget5}
If 
$u\not\in 
\bigcup\limits_{i\in P_{t'}}U_i
\cup
\bigcup\limits_{i\in Q_{t'}}U'_i
\cup
\bigcup\limits_{i\in R_{t'}}\{ u_i\}
\cup D_{t'}$, then
$x\left(t,S,B_{(1)(a)},B_{(1)(b)},B_{(2)},B_{d}\right)$
is the minimum of the two values
\begin{itemize}
\item $x\left(t',S,B_{(1)(a)},B_{(1)(b)},B_{(2)},B_{d}\right)$ and
\item $x\left(t',S\cup \{ u\},B_{(1)(a)},B_{(1)(b)},B_{(2)},B_{d}\right)$.
\end{itemize}
\end{claim}
\begin{proof}[Proof of Claim \ref{claimforget5}]
This follows immediately from the definitions.
\end{proof}

In order to complete the proof, 
it suffices to argue that, 
spending $2^{5w}\cdot {\rm poly}(n(G))$ time for each of the $n(T)$ nodes $t$ of $T$,
all values of $x\left(t,S,B_{(1)(a)},B_{(1)(b)},B_{(2)},B_{d}\right)$
can be determined.
Since the initialization of the leaves is trivial,
processing the nodes of $T$ from the leaves to the root,
we may assume, for each node $t$ currently considered,
that we dispose of all values for its one or two children.
Considering the cases corresponding to the different claims,
it is easy to see, that the join nodes considered in Claim~\ref{claimjoin}
entail most effort, and we give details only for these.
Therefore, let $t$ be a join node.
\begin{itemize}
\item We initialize all values $x(t,\ldots)$ values as $\infty$.
\item We loop through all at most $2^{5(w+1)}$ choices for 
$$\left(S,B'_{(1)(a)},B'_{(1)(b)},B'_{(2)},B''_{(1)(a)},B''_{(1)(b)},B''_{(2)},B_{d}\right)$$
using the notation of Claim~\ref{claimjoin}, and update 
$x\left(t,S,B_{(1)(a)},B_{(1)(b)},B_{(2)},B_{d}\right)$
with the minimum of its current value and
$$x\left(t',S,B'_{(1)(a)},B'_{(1)(b)},B'_{(2)},B_{d}\right)
+x\left(t'',S,B''_{(1)(a)},B''_{(1)(b)},B''_{(2)},B_{d}\right),$$
where every entry $b$ of $\left(B_{(1)(a)},B_{(1)(b)},B_{(2)}\right)$
satisfies 
$$b=\min\{ 1,b'+b''\}$$
for the two corresponding entries $b'$ and $b''$ 
of $\left(B'_{(1)(a)},B'_{(1)(b)},B'_{(2)}\right)$
and 
$\left(B''_{(1)(a)},B''_{(1)(b)},B''_{(2)}\right)$,
respectively.
\item Now, we loop through at most $2^{3(w+1)}$ choices for 
$${\cal P}=\left(S,B_{(1)(a)},B_{(1)(b)},B_{(2)},B_d\right)$$
as in Claim~\ref{claimjoin}, 
in lexicographically increasing order of $\left(B_{(1)(a)},B_{(1)(b)},B_{(2)},B_d\right)$.

For each choice of ${\cal P}=\left(S,B_{(1)(a)},B_{(1)(b)},B_{(2)},B_d\right)$,
we loop through all  
at most $2^{2(w+1)}$ choices for $\left(B'_{(1)(a)},B'_{(1)(b)},B'_{(2)}\right)$
such that 
$\left(B_{(1)(a)},B_{(1)(b)},B_{(2)}\right)\geq \left(B'_{(1)(a)},B'_{(1)(b)},B'_{(2)}\right)$
pointwise,
and update the value of $x\left(S,B'_{(1)(a)},B'_{(1)(b)},B'_{(2)},B'_d\right)$
with the minimum of its current value and 
the value of $x\left(S,B_{(1)(a)},B_{(1)(b)},B_{(2)},B_d\right)$.
\end{itemize}
By Claim~\ref{claimjoin} and the comments preceding it,
all values $x(t,\ldots)$ are correct after the completion of these loops,
which completes the proof.
\end{proof}

If only $(G,\tau)$ is given, and $G$ has order $n$ and treewidth $w$,
then one can, in time $2^{\mathcal{O}(w)}n$ \cite{bodrdrfolopi,kl},
determine a nice tree decomposition of $G$ 
of width at most $\mathcal{O}(w)$ such that 
the underlying tree has order $\mathcal{O}(wn)$.
This immediately implies our final result.

\begin{corollary}\label{corollary1}
Given 
a pair $(G,\tau)$, where 
$G$ is a graph of order $n$ and treewidth $w$,
and $\tau$ is a threshold function for $G$ 
satisfying (\ref{erest}),
the minimum order of a non-monotone target set for $(G,\tau)$
can be determined in time 
$2^{\mathcal{O}(w)}{\rm poly}(n(G))$.
\end{corollary}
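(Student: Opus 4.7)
My plan is to derive the corollary as an immediate consequence of Theorem~\ref{theorem3} together with a standard treewidth subroutine. The only ingredient missing from the hypothesis of Theorem~\ref{theorem3} compared to the corollary is the nice tree decomposition itself, so I would invoke the result cited just before the corollary (Bodlaender et al. \cite{bodrdrfolopi,kl}): given $G$ of order $n$ and treewidth $w$, one can compute in time $2^{\mathcal{O}(w)}\cdot n$ a nice tree decomposition $\left(T,(X_t)_{t\in V(T)}\right)$ of $G$ whose width is $\mathcal{O}(w)$ and whose underlying tree $T$ has $\mathcal{O}(wn)$ nodes.

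Equipped with this decomposition, I would feed the pair $(G,\tau)$ together with $\left(T,(X_t)_{t\in V(T)}\right)$ into Theorem~\ref{theorem3}. The theorem then computes the minimum order of a non-monotone target set for $(G,\tau)$ in time
\[
2^{5\cdot\mathcal{O}(w)}\cdot \mathcal{O}(wn)\cdot {\rm poly}(n(G)) \;=\; 2^{\mathcal{O}(w)}\cdot {\rm poly}(n(G)).
\]
Adding the $2^{\mathcal{O}(w)}\cdot n$ preprocessing time for constructing the decomposition (which is absorbed into the same bound) gives the claimed overall running time.

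There is no real obstacle here; the proof is a one-line composition of two known results. The only mild care needed is to verify that the width $\mathcal{O}(w)$ produced by the decomposition algorithm, when substituted into the exponent $5w$ from Theorem~\ref{theorem3}, still yields an exponent of the form $\mathcal{O}(w)$, which is clear, and that the $n(T)=\mathcal{O}(wn)$ factor is polynomial in $n$, which is also clear. This completes the proof sketch.
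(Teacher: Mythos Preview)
Your proposal is correct and matches the paper's own argument exactly: the paper states (in the paragraph immediately preceding the corollary) that one computes a nice tree decomposition of width $\mathcal{O}(w)$ and tree order $\mathcal{O}(wn)$ in time $2^{\mathcal{O}(w)}n$ via \cite{bodrdrfolopi,kl}, and then invokes Theorem~\ref{theorem3}. There is nothing to add.
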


\end{document}